\def\normo#1{\left\|#1\right\|}
\def\normb#1{\big\|#1\big\|}
\def\aabs#1{\left|#1\right|}
\def\brk#1{\left(#1\right)}
\def\bbrk#1{\big (#1\big )}
\def\norm#1{\|#1\|}
\def\jb#1{\langle#1\rangle}
\newcommand{\R}{{\mathbb R}}
\newcommand{\Z}{{\mathbb Z}}
\newcommand{\ft}{{\mathcal{F}}}
\newcommand{\Hl}{{\mathcal{H}}}
\newcommand{\les}{{\lesssim}}
\newcommand{\ges}{{\gtrsim}}
\newcommand{\supp}{{\mbox{supp}}}
\def\jb#1{\langle#1\rangle}
\def\norm#1{\|#1\|}
\def\normo#1{\left\|#1\right\|}
\def\normb#1{\big\|#1\big\|}
\def\aabs#1{\left|#1\right|}
\newcommand{\EQ}[1]{\begin{equation}\begin{split} #1 \end{split}\end{equation}}
\newcommand{\Del}[1]{}
\newcommand{\CAS}[1]{\begin{cases} #1 \end{cases}}
\numberwithin{equation}{section}
\newtheorem{thm}{Theorem}[section]
\newtheorem{lem}[thm]{Lemma}
\newtheorem{prop}[thm]{Proposition}
\theoremstyle{remark}
\newtheorem{theorem}{Theorem}[section]
\theoremstyle{remark}
  \newtheorem{remark}[theorem]{Remark}
\theoremstyle{definition}
\begin{document}
\subjclass[2020]{35Q55, 42B30}
\keywords{Nonlinear Schr\"odinger equation,  Decay estimate}

\title[Decay estimates for NLS]{Pointwise Decay of solutions to the energy critical nonlinear Schr\"odinger equations}\thanks{Z. Guo is supported by ARC DP200101065. C. Huang is supported by NNSF of China (No. ~11971503). L. Song is supported by  NNSF of China (No.~12071490)}
\author[Z. Guo]{Zihua Guo}
\address{School of Mathematical Sciences, Monash University, VIC 3800, Australia }
\email{zihua.guo@monash.edu}

\author[C. Huang]{Chunyan Huang}
\address{School of Statistics and Mathematics, Central University of Finance and Economics, Beijing
100081, China}
\email{hcy@cufe.edu.cn}

\author[L. Song]{Liang Song}
\address{School of Mathematics, Sun Yat-sen Univeristy, Guangzhou 510275, China}
\email{songl@mail.sysu.edu.cn}

\begin{abstract}
In this note, we prove pointwise decay in time of solutions to  the 3D energy-critical nonlinear Schr\"odinger equations assuming data in $L^1\cap H^3$.  The main ingredients are the boundness of the Schr\"odinger propagators in Hardy space due to Miyachi \cite{Miyachi} and a fractional Leibniz rule in the Hardy space. We also extend the  fractional chain rule to the Hardy space.
\end{abstract}

\maketitle

\section{Introduction}

Consider the nonlinear Schr\"odinger equation
\EQ{\label{eq:NLS}
\CAS{i\partial_t u+\Delta u= \mu |u|^{4}u, \quad (x,t)\in \mathbb{R}^3\times \R,\\
u(x,0)=u_0(x)}}
with $\mu\in \{1,-1\}$.
It is well-known that the solutions of the linear Schr\"odinger equation satisfy the following dispersive estimates
\begin{equation}\label{decay-NLS}
\|e^{it\Delta}u_0\|_{L^{\infty}_x}\leq C |t|^{-3/2}\|u_0\|_{L^1_x}.
\end{equation}
It is natural to ask whether one can obtain global solutions $u$ to the nonlinear Schr\"odinger equation \eqref{eq:NLS} with the same time decay, namely
\begin{equation}\label{decay-NLS2}
\|u(t)\|_{L^{\infty}_x}\leq C |t|^{-3/2}.
\end{equation}

The equation \eqref{eq:NLS} is energy-critical. More precisely, \eqref{eq:NLS} is invariant under the following scaling transform
\EQ{
u(t,x)\to \lambda^{1/2} u(\lambda x, \lambda^2 t), \quad u_0(x)\to \lambda^{1/2} u_0(\lambda x)
}
and $\norm{\lambda^{1/2} u_0(\lambda x)}_{\dot H^1}=\norm{u_0}_{\dot H^1}$.
Global well-posedness and scattering theory for \eqref{eq:NLS} were extensively studied in the last two decades (e.g. see \cite{I-team, KM} for the introduction).  A global solution $u$ of \eqref{eq:NLS} scatters in the energy space means there exists $\phi_\pm\in H^1$ such that
\EQ{
\lim_{t\to \pm \infty} \norm{u(t)-e^{it\Delta}\phi_\pm}_{H^1}=0.
}
Even if we have scattering in $H^1$, to get pointwise-in-time decay for solutions of \eqref{eq:NLS} is not trivial. On one hand, one does not expect pointwise-in-time decay if assuming initial data only in $H^1(\R^3)$ in view of the linear Schr\"odinger equation. On the other hand, assuming $u_0\in L^1$ one cannot ensure the scattering state $\phi_{\pm}\in L^1$.  Thus to obtain pointwise decay requires some extra effort.  Recently, by contradiction argument Fan and Zhao \cite{FZ} proved that scattering solution of \eqref{eq:NLS} satisfies \eqref{decay-NLS2} assuming $u_0\in L^1\cap H^k$ for some $k$.

In this note, we consider the finer asymptotic behaviour based on scattering results in the energy space, and hence give a direct and simpler proof of Fan-Zhao's result for \eqref{eq:NLS}.  Moreover, our result is more quantitative.  We can show that the scattering state $\phi_\pm\in L^1$ and reversely the wave operator is also defined in $L^1$.  
The main result is
\begin{thm}\label{Th1}

(1) Let $u$ be a global solution of \eqref{eq:NLS} with finite scattering norm
\EQ{
\norm{u}_{L^{10}_{t,x}(\R\times \R^3)}\leq K
}
for some $K>0$. Assume in addition $u(0)\in H^{3}(\R^3)\cap L^1(\R^3)$. Then  $u\in C(\R: H^3)$,  $e^{-it\Delta}u(t)\in C(\R;L^1)$ and
\EQ{
\|u(t,x)\|_{L^{\infty}_x}\leq C(K, u_0) |t|^{-3/2}.
}
Moreover, there exists $\phi_\pm \in H^{3}(\R^3)\cap L^1(\R^3)$ such that
\EQ{
\lim_{t\to \pm \infty} (\norm{u(t)-e^{it\Delta}\phi_\pm}_{H^3}+&\norm{e^{-it\Delta }u(t)-\phi_\pm}_{L^1})=0\\
 \norm{u(t)-e^{it\Delta}\phi_\pm}_{L^\infty}\leq& C|t|^{-5}, \quad \pm t>0.
}

(2) Assume $\phi_+ \in H^3(\R^3)\cap L^1(\R^3)$.  Then there exists a unique solution $u\in L_{t,x}^{10}$ to \eqref{eq:NLS} such that $u\in C(\R: H^3)$, $e^{-it\Delta}u(t)\in C(\R;L^1)$ and 
\EQ{
\lim_{t\to  \infty} ( \norm{u(t)-e^{it\Delta}\phi_+}_{H^3}+& \norm{e^{-it\Delta }u(t)-\phi_+}_{L^1})=0, \\
\norm{u(t)-e^{it\Delta}\phi_+}_{L^\infty}\leq &C|t|^{-5}, \quad t>0.
}
Similar results hold for $t\to -\infty$.
\end{thm}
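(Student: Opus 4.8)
The proof splits naturally into the two parts of the statement, but both hinge on a single quantitative mechanism: propagating $L^1$-type (Hardy space) control along with the existing $H^3$ scattering theory. I will work with the Duhamel formula
\EQ{
u(t)=e^{it\Delta}u_0-i\mu\int_0^t e^{i(t-s)\Delta}(|u|^4u)(s)\,ds,
}
and conjugate by the free flow to study $v(t):=e^{-it\Delta}u(t)$, which satisfies $v(t)=u_0-i\mu\int_0^t e^{-is\Delta}(|u|^4u)(s)\,ds$. The first step is to upgrade the given $L^{10}_{t,x}$ bound to full $H^3$ spacetime control: by the standard stability/iteration theory for the energy-critical NLS (splitting $\R$ into finitely many subintervals on which $\|u\|_{L^{10}_{t,x}}$ is small, then running Strichartz estimates with the Leibniz rule for $\langle\nabla\rangle^3$ applied to $|u|^4u$), one gets $u\in C(\R;H^3)$ together with a global bound on all the relevant $H^3$-Strichartz norms, depending only on $K$ and $\|u_0\|_{H^3}$. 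This part is essentially in the literature; the new contributions are the $L^1$/Hardy refinements, so I'll state the $H^3$ facts and move on.

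\textbf{The $L^1$ / Hardy bootstrap (core of Part (1)).} The key point is that $e^{is\Delta}$ is \emph{not} bounded on $L^1$, but by Miyachi's theorem it is bounded on the Hardy space $\Hl^1$ with operator norm growing only polynomially in $s$ (like $\langle s\rangle^{3/2}$, roughly). So I will measure the nonlinear correction in $\Hl^1$ rather than $L^1$. Using the fractional Leibniz rule in $\Hl^1$ (quoted from the excerpt) together with the fractional chain rule in $\Hl^1$ (also announced in the abstract), one estimates $\||u|^4u\|_{\Hl^1}$, or more precisely a high-regularity version $\|\langle\nabla\rangle^{3}(|u|^4u)\|_{\Hl^1}$, by $\|u\|_{\Hl^1}^{?}\|u\|_{W^{3,p}}^{4}$-type products for suitable Strichartz-admissible exponents, where the $H^3$ spacetime norms already controlled in Step 1 absorb the high-frequency factors. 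The quantitative decay $\|u(t)-e^{it\Delta}\phi_\pm\|_{L^\infty}\lesssim |t|^{-5}$ then comes from writing $u(t)-e^{it\Delta}\phi_\pm=-i\mu\int_t^{\pm\infty}e^{i(t-s)\Delta}(|u|^4u)(s)\,ds$, bounding $e^{i(t-s)\Delta}$ from $\Hl^1$ to $L^\infty$ by $|t-s|^{-3/2}$ (the Hardy-space analogue of \eqref{decay-NLS}), and using that $\||u|^4u(s)\|_{\Hl^1}$ decays rapidly because $u(s)$ itself already decays (from the $|t|^{-3/2}$ bound, $\|u(s)\|_{L^\infty}^4$ contributes $|s|^{-6}$, and one needs one more factor of $L^1$-in-$x$ control, which is where $v\in\Hl^1$ re-enters). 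Convergence of $v(t)$ in $\Hl^1$ (hence defining $\phi_\pm$, and giving $\phi_\pm\in L^1$ since $\Hl^1\hookrightarrow L^1$) follows from Cauchy-criterion estimates on $\int_{t_1}^{t_2}\|e^{-is\Delta}(|u|^4u)(s)\|_{\Hl^1}\,ds$, which is finite because the integrand is $O(\langle s\rangle^{3/2}\cdot\langle s\rangle^{-6}\cdot\text{(bounded)})$ — integrable.

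\textbf{Part (2): the wave operator in $L^1\cap H^3$.} Given $\phi_+\in H^3\cap L^1$, I construct $u$ by a fixed-point argument for the integral equation $u(t)=e^{it\Delta}\phi_+ + i\mu\int_t^{\infty}e^{i(t-s)\Delta}(|u|^4u)(s)\,ds$ on $[T,\infty)$ for $T$ large, in a complete metric space built from the $H^3$ Strichartz norms \emph{intersected} with a weighted norm $\sup_{s\ge T}\langle s\rangle^{\sigma}\|e^{-is\Delta}u(s)\|_{\Hl^1}$ (or with $\|u(s)\|_{L^\infty}\langle s\rangle^{3/2}$); smallness of $\|e^{it\Delta}\phi_+\|_{L^{10}_{t,x}([T,\infty))}$ for large $T$ makes the contraction work, and the Hardy-space Leibniz/chain rules again control the nonlinearity. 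Then one extends the solution backward to all of $\R$ using Part (1)'s global theory (the extension has finite $L^{10}_{t,x}$ norm, so Part (1) applies and gives $e^{-it\Delta}u\in C(\R;L^1)$ globally). Uniqueness in $L^{10}_{t,x}$ is the standard energy-critical uniqueness statement.

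\textbf{Main obstacle.} The technical heart is the fractional chain rule in $\Hl^1$: unlike $L^p$ for $1<p<\infty$, the Hardy space does not admit a direct Littlewood–Paley square-function characterization that cooperates with pointwise nonlinearities, and $|u|^4u$ is only $C^4$ (not smooth) at $u=0$, so one must be careful about how many derivatives the chain rule can carry and with what product of norms — this is precisely the estimate that forces the hypothesis $u_0\in H^3$ (rather than $H^1$) and dictates the numerology. A secondary, more bookkeeping-type difficulty is matching Strichartz exponents so that the Hardy-space factor (which behaves like an $L^1$ factor, i.e. sits at the non-admissible endpoint) is paired with genuinely admissible high-regularity factors and with the polynomial-in-$s$ loss from Miyachi's bound, while still keeping everything time-integrable near $s=\pm\infty$ and near $s=0$.
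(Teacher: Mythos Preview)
Your overall architecture---Duhamel, conjugate by $e^{-it\Delta}$, Miyachi's Hardy-space bound to replace $L^1$-boundedness of the propagator, and a bootstrap driven by $H^3$ spacetime control---is exactly the paper's, but you have misidentified the key nonlinear estimate and, correspondingly, the source of the $H^3$ hypothesis. The nonlinearity $|u|^4u=u^3\bar u^2$ is a \emph{polynomial} in $u,\bar u$, so no chain rule is needed at all; the paper uses only the fractional Leibniz rule in $\Hl^1$ (Lemma~\ref{lem:Leib}) and in fact states explicitly that the chain rule (Lemma~\ref{lem:fracChain}), though proved, is not used for Theorem~\ref{Th1}. The reason three derivatives are required is Miyachi's estimate itself: in dimension $d=3$ one has $\|e^{-is\Delta}g\|_{F^0_{1,2}}\lesssim\langle s\rangle^{3/2}\|g\|_{F^{3}_{1,2}}$, so the nonlinearity must be placed in $F^3_{1,2}$, and the Leibniz estimate $\||u|^4u\|_{F^3_{1,2}}\lesssim\|u\|_\infty^{3}\|J^3u\|_2\|u\|_2$ (Lemma~\ref{lem:nonlinear}) is what converts this back to the available $H^3$ control.

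Your bootstrap is also vaguer than what actually closes. The paper does not track $\|e^{-it\Delta}u(t)\|_{\Hl^1}$ (note $u_0\in L^1$ need not lie in $\Hl^1$); it bootstraps directly on $\|u\|_{X_T}:=\sup_{t\ge T}t^{3/2}\|u(t)\|_{L^\infty}$. Writing $u(t)=e^{it\Delta}v(t)$ and using the dispersive estimate gives $\|u\|_{X_T}\le\|u_0\|_{L^1}+\int_0^\infty\|e^{-is\Delta}(|u|^4u)\|_{L^1}\,ds$; the piece $\int_0^T$ is finite by crude Strichartz, while on $[T,\infty)$ one combines Miyachi with Lemma~\ref{lem:nonlinear} to get $\int_T^\infty s^{3/2}\|u(s)\|_\infty^3\|J^3u\|_2\|u\|_2\,ds\lesssim\|u\|_{L^2_{t\ge T}L^\infty_x}^2\,\|u\|_{X_T}$, and the prefactor tends to $0$ as $T\to\infty$ by the $H^3$ Strichartz bound, closing the loop. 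For the $|t|^{-5}$ rate you also need to split $\int_t^\infty=\int_t^{2t}+\int_{2t}^\infty$: on the far piece the dispersive factor $|t-s|^{-3/2}$ is harmless and $\||u|^4u\|_{L^1}\lesssim\|u\|_\infty^3\|u\|_2^2\lesssim s^{-9/2}$ suffices, but on the near piece $|t-s|^{-3/2}$ is not integrable, so the paper uses $H^2\hookrightarrow L^\infty$ instead and picks up $\int_t^{2t}\|u(s)\|_\infty^4\,ds\lesssim t^{-5}$. Your sketch omits this near/far distinction.
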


It is known that global well-posedness and scattering of \eqref{eq:NLS} holds in
$\dot{H}^1(\R^3)$ and
\EQ{
\norm{u}_{L^{10}_{t,x}(\R\times \R^3)}\leq C_{\norm{u_0}_{\dot{H}^1}}
}
provided that either of the following conditions hold:
\begin{itemize}
    \item in the defocusing case $\mu=1$ (see \cite{I-team});
    \item in the focusing case $\mu=-1$ (see \cite{KM}): $E(u_0)<E(W)$, $\norm{\nabla u_0}_{L^2}<\norm{\nabla W}_{L^2}$, $u_0$ radial, where 
    \EQ{
    E(u)=\int \frac{1}{2}|\nabla u|^2+\frac{1}{6}|u|^6dx
    }
    and $W=(1+|x|^2/3)^{-1/2}$.
\end{itemize}
Morever, assuming additional conditions $u_0\in H^3(\R^3)$, one can obtain the persistence (scattering in $H^3$) and
\EQ{\label{eq:spacetimebound}
\norm{\jb{\nabla}^3 u}_{L_t^q L_x^r(\R\times \R^3)}\leq C_{\norm{u_0}_{H^3}}
}
where $(q,r)$ satisfies the admissible conditions: $2\leq q,r\leq \infty$ and $\frac{1}{q}=\frac{3}{2}(\frac{1}{2}-\frac{1}{r})$.

It is known (see \cite{HN}) that for nice initial data $f$ we have for $\gamma>\frac{n}{2}+2\beta$
\begin{equation}
e^{it\Delta}f(x)=\frac{1}{(i2t)^{n/2}}e^{i\frac{|x|^2}{4t}}\hat f(x/t)+\frac{1}{t^{n/2+\beta}}O(\norm{\jb{x}^\gamma f}_{L^2}), \quad t\to \pm\infty.
\end{equation}
Scattering and the wave operator in $L^1$ thus provide a precise asymptotic profile of the nonlinear solutions.

Our main ingredient of the proof is the boundedness for the Schr\"odinger propagator in Hardy space which was proved by Miyachi \cite{Miyachi} and a fractional Leibniz rule in Hardy space.

\section{Proof of the main Theorem}

Throughout this note, we use $C$ to denote some universal constant which may change from line to line. 
For $X, Y \in \mathbb{R}$, $X\lesssim Y$
means $X\leq CY$ for some $C>0$, similarly for $X\ges Y$.  $X\sim Y$ means $X\les Y$ and $X\ges Y$.

Fix a bump function $\eta\in C_0^\infty(\R^d)$ such that $\eta$ is non-negative, radial and radially decreasing, $\supp \eta\subset \{|x|\leq 1.1\}$ and $\eta(x)\equiv 1$ for $|x|\leq 1$.
Let $\chi(x)=\eta(x)-\eta(2x)$. 
For $k\in \Z$, define the operators
\EQ{
\dot P_k=\ft^{-1} \chi(\frac{x}{2^k})\ft, \quad P_{\leq k}=\ft^{-1} \eta(\frac{x}{2^k})\ft, \quad P_{>k}=I-P_{\leq k}
}
and
\EQ{
P_k=
\CAS{
\ft^{-1} \chi(\frac{x}{2^k})\ft, & \quad k> 0;\\
\ft^{-1} \eta(x)\ft, &\quad k=0;\\
0, &\quad k\leq -1.
}
}
Here $\ft$ is the Fourier transform: $\ft f(\xi)=\int_{\R^d} e^{-ix\xi}f(x)dx$. We define $D^s=\ft^{-1}|\xi|^s\ft$ and $J^s=\ft^{-1}(1+|\xi|^2)^{s/2}\ft$.

Besov space $B^s_{p,q}$ is the Banach space with the norm
\EQ{
\norm{f}_{B^s_{p,q}}=\normb{2^{ks}\norm{P_k f}_{L^p}}_{l_k^q}
}
and Triebel-Lizorkin space $F^s_{p,q}$ is the Banach space with the norm
\EQ{
\norm{f}_{F^s_{p,q}}=\normb{\norm{2^{ks} P_k f}_{l_k^q}}_{L^p}.
}
Homegeneous versions $\dot{B}^s_{p,q}$ and $\dot{F}^s_{p,q}$ are defined similarly by replacing $P_k$ with $\dot P_k$.
It is well-known that 
\EQ{
\dot{F}^{0}_{1,2} \sim \Hl^1, \quad {F}^{0}_{1,2} \sim h^1,
}
where $\Hl^1$ is the Hardy space and $h^1$ is the local Hardy space. $h^1$ is embedded into $L^1$.

Let $M$ be the Hardy-Littlewood maximal operator
\EQ{
Mf(x)=\sup_{r>0}\frac{1}{|B_r(x)|}\int_{B_r(x)} |f(y)|dy.
}
The maximal operator can control many $L^1$-average type operators.  
We recall the well-known pointwise maximal function estimate, see~\cite{Stein93}.
\begin{lem} \label{radialmaxicon}
Let $g(x)$ be a nonnegative radial decreasing integrable function, suppose $|\psi(x) | \leq g(x)$ almost everywhere and $f \in L_{loc}^1(\R^d)$,  then 
\begin{align*}
|\psi_{\epsilon} * f(x)| \leq C \|g\|_{L^1}\cdot M(f)(x), \ \ \ \ \forall\, \epsilon>0,
\end{align*} 
where $\psi_{\epsilon}(x) = \epsilon^{-d} \psi(\epsilon^{-1}x)$.
\end{lem}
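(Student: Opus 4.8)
The plan is to run the classical layer-cake argument (cf.\ \cite{Stein93}). First I would discard the parameter $\epsilon$: the kernel $g_\epsilon(x)=\epsilon^{-d}g(\epsilon^{-1}x)$ is again nonnegative, radial, radially decreasing and integrable with $\norm{g_\epsilon}_{L^1}=\norm{g}_{L^1}$, and $|\psi_\epsilon(x)|=\epsilon^{-d}|\psi(\epsilon^{-1}x)|\le \epsilon^{-d}g(\epsilon^{-1}x)=g_\epsilon(x)$ almost everywhere, so after replacing $g$ by $g_\epsilon$ and $\psi$ by $\psi_\epsilon$ we may assume $\epsilon=1$. Then I bound pointwise
\[
|\psi*f(x)|\le \int_{\R^d}|\psi(y)|\,|f(x-y)|\,dy\le \int_{\R^d} g(y)\,|f(x-y)|\,dy=(g*|f|)(x),
\]
so the whole matter reduces to estimating $g*|f|$.

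Next, since $g$ is radial and radially decreasing, its superlevel sets $\{y:g(y)>t\}$ are balls $B_{r(t)}(0)$ centered at the origin for a.e.\ $t>0$, and the layer-cake formula gives $g(y)=\int_0^\infty \mathbf 1_{B_{r(t)}(0)}(y)\,dt$. Substituting this and applying Fubini (legitimate, the integrand being nonnegative), I obtain
\[
(g*|f|)(x)=\int_0^\infty\Big(\int_{B_{r(t)}(0)}|f(x-y)|\,dy\Big)dt=\int_0^\infty |B_{r(t)}(0)|\cdot\frac{1}{|B_{r(t)}(x)|}\int_{B_{r(t)}(x)}|f(z)|\,dz\,dt,
\]
using translation invariance of Lebesgue measure and the change of variables $z=x-y$. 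By the definition of the Hardy--Littlewood maximal function each inner average is at most $M(f)(x)$, whence $(g*|f|)(x)\le M(f)(x)\int_0^\infty |B_{r(t)}(0)|\,dt$, and one more application of Fubini (or the layer-cake formula once more) identifies $\int_0^\infty |B_{r(t)}(0)|\,dt=\int_0^\infty |\{g>t\}|\,dt=\norm{g}_{L^1}$. This gives the claim, in fact with $C=1$.

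I do not expect a genuine obstacle here; the only points needing a little care are measure-theoretic. One should check that the superlevel sets of a radially decreasing function are balls up to null sets (allowing the degenerate cases $r(t)=0$ and $r(t)=\infty$), that $\int_{B_r(x)}|f|<\infty$ for a.e.\ $x$ so that the averages make sense (this follows from $f\in L^1_{loc}$; and if $M(f)(x)=\infty$ the asserted inequality is trivial), and that both uses of Fubini are applied to nonnegative integrands. If one prefers to bypass the layer-cake formula altogether, the estimate can instead be proved first for radial decreasing step kernels $g=\sum_j c_j\mathbf 1_{B_{r_j}(0)}$ with $c_j\ge 0$, directly from the definition of $M$, and then extended to the general case by monotone convergence.
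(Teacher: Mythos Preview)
Your argument is correct and is the standard layer-cake proof from \cite{Stein93}. The paper does not give its own proof of this lemma---it simply states the result and cites Stein---so there is nothing further to compare.
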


We will also need the following result concerning the boundedness of $M$ acting on vector-valued functions (see \cite{Stein93}).
\begin{lem}[Maximal inequality]\label{lem:maxi}
Let $(p,q)\in (1,\infty)\times (1,\infty] $ or $p=q=\infty$ be given. Suppose $ \{f_j\}_{j\in \mathbb{Z}}$ is a sequence of functions in $L^p(\R^d)$ satisfying $\|f_j\|_{l_j^q(\mathbb{Z})} \in L^p(\R^d)$,  then 
\EQ{
\| M(f_j)(x)\|_{L_x^pl_j^q} \leq C \|f_j(x)\|_{L_x^pl_j^q}
}
for some constant $C=C(p,q)$.
\end{lem}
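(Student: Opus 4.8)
The plan is to run the classical Fefferman--Stein argument. First the degenerate cases: if $q=\infty$ (in particular if $p=q=\infty$), then $\sup_j Mf_j(x)\le M(\sup_k|f_k|)(x)$ pointwise, so the estimate reduces to the scalar boundedness of $M$ on $L^p(\R^d)$ for $1<p\le\infty$; and if the two exponents coincide, say both equal $r\in(1,\infty)$, Tonelli and the scalar bound give
\EQ{
\int_{\R^d}\sum_j (Mf_j)^r\,dx=\sum_j\int_{\R^d}(Mf_j)^r\,dx\lesssim \sum_j\int_{\R^d}|f_j|^r\,dx=\int_{\R^d}\sum_j|f_j|^r\,dx,
}
so the diagonal estimate $\norm{Mf_j}_{L^r_x l_j^r}\lesssim\norm{f_j}_{L^r_x l_j^r}$ holds for every $r\in(1,\infty)$. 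It remains to handle $1<q<\infty$ with $p\neq q$, which I would split into the range $p>q$ (via a weighted maximal inequality) and the range $p<q$ (via a Calder\'on--Zygmund/weak-type analysis and interpolation).

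\emph{The case $q<p<\infty$.} I would invoke the weighted maximal inequality $\int (Mf)^q\,w\,dx\lesssim_q \int |f|^q\,(Mw)\,dx$, valid for $1<q<\infty$ and any $0\le w\in L^1_{loc}(\R^d)$. This in turn follows from the weighted weak-type bound $w(\fbrk{Mf>\lambda})\lesssim \lambda^{-1}\int_{\fbrk{|f|>\lambda/2}}|f|\,Mw\,dx$ --- obtained by covering $\fbrk{Mf>\lambda}$ with cubes $Q$ on which $|Q|^{-1}\int_Q|f|>\lambda$, passing to a disjoint subfamily by Vitali, and using $w(5Q)/|Q|\lesssim\inf_Q Mw$ --- together with the layer-cake formula, whose $\lambda$-integral converges precisely because $q>1$. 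Granting this, since $\norm{Mf_j}_{L^p_x l_j^q}^q=\norm{\sum_j (Mf_j)^q}_{L^{p/q}_x}$ and $p/q>1$, I pick $0\le w$ with $\norm{w}_{L^{(p/q)'}_x}=1$ realizing the dual norm, apply the weighted inequality to each $f_j$, sum in $j$, and conclude with the scalar bound $\norm{Mw}_{L^{(p/q)'}_x}\lesssim\norm{w}_{L^{(p/q)'}_x}$ (legitimate since $(p/q)'>1$).

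\emph{The case $1<p<q<\infty$.} First I would prove the vector-valued weak-type $(1,1)$ bound
\EQ{
\aabs{\fbrk{x:\norm{Mf_j(x)}_{l_j^q}>\lambda}}\lesssim \frac1\lambda\int_{\R^d}\norm{f_j(x)}_{l_j^q}\,dx.
}
Apply the Calder\'on--Zygmund decomposition to $g:=\norm{f_j}_{l_j^q}$ at height $\lambda$: disjoint cubes $\fbrk{Q_k}$ with $\lambda<|Q_k|^{-1}\int_{Q_k}g\le 2^d\lambda$, $\sum_k|Q_k|\lesssim\lambda^{-1}\norm{g}_{L^1_x}$, and $g\le\lambda$ a.e.\ off $\Omega:=\Cu_k Q_k$; split $f_j=f_j\mathbf{1}_{\Omega^c}+f_j\mathbf{1}_{\Omega}$. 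The good part satisfies $\norm{f_j\mathbf{1}_{\Omega^c}}_{L^\infty_x l_j^q}\le\lambda$ and $\norm{f_j\mathbf{1}_{\Omega^c}}_{L^1_x l_j^q}\le\norm{g}_{L^1_x}$, hence $\norm{f_j\mathbf{1}_{\Omega^c}}_{L^q_x l_j^q}^q\lesssim\lambda^{q-1}\norm{g}_{L^1_x}$, so the diagonal $(q,q)$ bound and Chebyshev dispose of it. For the bad part, using Minkowski's inequality in $l_j^q$ and the geometry of the $Q_k$ one checks that $\norm{M(f_j\mathbf{1}_{\Omega})(x)}_{l_j^q}\lesssim\norm{M\ti{f_j}(x)}_{l_j^q}$ for $x\notin\Cu_k 3Q_k$, where $\ti{f_j}:=\sum_k\bbrk{|Q_k|^{-1}\int_{Q_k}|f_j|}\mathbf{1}_{Q_k}$ obeys $\norm{\ti{f_j}}_{L^\infty_x l_j^q}\lesssim\lambda$ and $\norm{\ti{f_j}}_{L^1_x l_j^q}\lesssim\norm{g}_{L^1_x}$; the diagonal bound applies once more, and $\aabs{\Cu_k 3Q_k}\lesssim\lambda^{-1}\norm{g}_{L^1_x}$. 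Marcinkiewicz interpolation for the sublinear map $\fbrk{f_j}\mapsto\norm{Mf_j}_{l_j^q}$, between this weak $(1,1)$ bound and the diagonal $(q,q)$ bound, then yields the claim for $1<p<q$.

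The genuinely delicate point is the bad-part estimate above: $M(f_j\mathbf{1}_{\Omega})$ must be controlled off the dilated cubes \emph{uniformly in $j$}, and the trick is to dominate it pointwise by $M$ of the averaged sequence $\ti{f_j}$, whose $L^1_x l_j^q$ and $L^\infty_x l_j^q$ norms are tamed through Minkowski's inequality in $l_j^q$ and the disjointness of the Calder\'on--Zygmund cubes. Everything else reduces to the scalar maximal theorem, Tonelli, the layer-cake formula, and elementary covering lemmas; alternatively, one may simply quote \cite{Stein93}.
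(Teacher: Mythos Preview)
The paper does not prove this lemma at all; it is stated as a known result with a citation to \cite{Stein93}, so there is no ``paper's own proof'' to compare against. Your sketch is the classical Fefferman--Stein argument and is correct in all essentials: the $q=\infty$ and diagonal $p=q$ cases are immediate; for $q<p<\infty$ you dualize and invoke the Fefferman--Stein weighted inequality $\int (Mf)^q w\lesssim_q\int|f|^q Mw$, which is legitimate since $(p/q)'>1$; and for $1<p<q$ you establish the vector-valued weak-$(1,1)$ bound via a Calder\'on--Zygmund decomposition of the scalar function $g=\norm{f_j}_{l^q_j}$ and interpolate with the diagonal $(q,q)$ estimate. The one step you flag as delicate --- dominating $M(f_j\mathbf{1}_\Omega)(x)$ by $M\ti f_j(x)$ for $x\notin\Cu_k 3Q_k$ --- is indeed fine: any ball $B_r(x)$ meeting $Q_k$ with $x\notin 3Q_k$ has radius at least the sidelength of $Q_k$, so $Q_k\subset CB_r(x)$ and $\int_{B_r(x)\cap Q_k}|f_j|\le\int_{Q_k}|f_j|=\int_{Q_k}\ti f_j\le\int_{CB_r(x)}\ti f_j$, whence the pointwise bound follows after summing over $k$ (the cubes are disjoint) and taking the $l^q_j$ norm. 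Since the paper only needs the statement and cites the standard reference, your write-up goes well beyond what is required, but nothing in it is wrong.
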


It is worth noting that the above Lemma fails for $p=1$, which causes some difficulty when dealing with $L^1$-based space (e.g. Hardy space).  This was usually overcome by the following variant maximal estimate.

\begin{lem}[\cite{Triebel}]\label{lem: scalarmaxiesti}
Assume $0<r<\infty$.  There exists $C>0$ such that for all $R>0$
\begin{align}
\sup_{y\in \R^d} \frac{|f(x-y)|}{(1+|R y|)^{\frac{d}{r}}} \leq C \big[M(|f|^r)(x)\big]^{\frac{1}{r}}, \quad \forall x\in \R^d
\end{align} 
holds for all $f$ with $\supp \hat{f}\subset B(R)$.
\end{lem}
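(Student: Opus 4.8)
The plan is to reduce to $R=1$ by scaling and then combine the reproducing identity for band‑limited functions with a dyadic decomposition of a weighted convolution. First I would observe that if $\supp\hat f\subset B(R)$ then $g:=f(R^{-1}\cdot)$ satisfies $\supp\hat g\subset B(1)$, that $|g(Rx-Rw)|=|f(x-w)|$ (so the factor $(1+|Rw|)^{d/r}$ is exactly what the substitution $z\mapsto Rw$ produces in the denominator), and that a change of variables gives $M(|g|^r)(Rx)=M(|f|^r)(x)$; hence it suffices to prove, for $f$ with $\supp\hat f\subset B(1)$,
\[
\sup_{y\in\R^d}\frac{|f(x-y)|}{(1+|y|)^{d/r}}\les\big[M(|f|^r)(x)\big]^{1/r},\qquad x\in\R^d.
\]
Fix $\Psi\in C_0^\infty(\R^d)$ with $\Psi\equiv1$ on $B(1)$ and set $\phi:=\ft^{-1}\Psi\in\Sch(\R^d)$; since $\supp\hat f\subset B(1)$ we have $f=f*\phi$, so $|f(x-y)|\le\int_{\R^d}|f(x-y-z)||\phi(z)|\,dz$ for all $x,y$.

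Next I would move the power $r$ inside the convolution: for $r\ge1$ this is H\"older's inequality with respect to the measure $|\phi(z)|\,dz$, and for $0<r<1$ it is Jensen's inequality for the concave function $t\mapsto t^r$ after normalising that measure to a probability measure; both give, for every $0<r<\infty$,
\[
|f(x-y)|^r\les\int_{\R^d}|f(x-y-z)|^r|\phi(z)|\,dz=\int_{\R^d}|f(x-w)|^r|\phi(w-y)|\,dw,
\]
with a constant depending only on $r$. Because $\phi\in\Sch$, for any fixed $K>d$ we have $|\phi(w-y)|\les_K(1+|w-y|)^{-K}$, so the lemma is reduced to the pointwise estimate
\[
\int_{\R^d}\frac{|f(x-w)|^r}{(1+|w-y|)^{K}}\,dw\les(1+|y|)^{d}\,M(|f|^r)(x).
\]

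The crux, and the step I expect to be the main obstacle, is to obtain precisely the exponent $d$ on $(1+|y|)$ here: the naive bound $(1+|w-y|)^{-K}\le(1+|y|)^{K}(1+|w|)^{-K}$ costs $(1+|y|)^{K}$ and is far too lossy. Instead I would split the $w$‑integral over $\{|w|\le\rho_0\}$ and the dyadic shells $\{2^{j-1}\rho_0\le|w|<2^{j}\rho_0\}$, $j\ge1$, where $\rho_0:=\max(1,|y|)$, and use two facts: on the ball of radius $2^{j}\rho_0$ about $x$ one has $\int_{|w|\le 2^{j}\rho_0}|f(x-w)|^r\,dw=\int_{B_{2^j\rho_0}(x)}|f|^r\les(2^{j}\rho_0)^{d}M(|f|^r)(x)$ by the very definition of $M$; and on the $j$‑th shell (for $j$ large) $|w-y|\gec2^{j}\rho_0$, so the weight contributes $\les2^{-jK}$. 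Summing $\sum_{j\ge0}2^{-jK}(2^{j}\rho_0)^{d}\les\rho_0^{d}$, which converges exactly because $K>d$, yields $\les\rho_0^{d}M(|f|^r)(x)\les(1+|y|)^{d}M(|f|^r)(x)$. Taking $r$‑th roots and tracing back through the previous displays gives the lemma with a constant $C=C(d,r)$. (Here one may assume $f$ is a bounded band‑limited function, so that $f=f*\phi$ holds pointwise and the interchanges above are legitimate; when $M(|f|^r)(x)=+\infty$ the asserted bound is vacuous.) The whole difficulty is thus concentrated in this last estimate: one must not separate the variables in the weight, but rather balance the growth $(2^{j}\rho_0)^{d}$ of the local $L^r$‑mass against the Schwartz decay $2^{-jK}$ of the tail, which is exactly what produces the sharp power $(1+|y|)^{d/r}$.
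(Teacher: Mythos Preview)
The paper does not give its own proof of this lemma; it is quoted from Triebel's book. So there is no ``paper's proof'' to compare against, and I will simply evaluate your argument.

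For $r\ge 1$ your proof is correct: the reduction to $R=1$ by scaling, the reproducing identity $f=f*\phi$, the Jensen/H\"older step $|f(x-y)|^r\les\int|f(x-w)|^r|\phi(w-y)|\,dw$, and the dyadic shell argument producing exactly the factor $(1+|y|)^d$ are all fine.

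For $0<r<1$, however, there is a genuine gap. You write that the inequality
\[
|f(x-y)|^r\ \les\ \int_{\R^d}|f(x-y-z)|^r\,|\phi(z)|\,dz
\]
follows from ``Jensen's inequality for the concave function $t\mapsto t^r$ after normalising that measure to a probability measure''. Jensen for a concave $\varphi$ reads $\varphi\bigl(\int g\,d\mu\bigr)\ge\int\varphi(g)\,d\mu$; applied to $\varphi(t)=t^r$ it gives $\bigl(\int|f|\,|\phi|\bigr)^r\ge\int|f|^r|\phi|$, which is the \emph{reverse} of what you need. The displayed inequality is in fact false for general $f$: take $\phi$ approximately uniform on a unit ball and $f$ a tall narrow bump so that $\int|f|\,|\phi|\sim 1$ but $\int|f|^r|\phi|\to 0$. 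Since the range $0<r<1$ is exactly the range used later in the paper (Lemma~\ref{lem:keyesti} is applied with $r<1$), this gap matters.

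The standard remedy (and this is essentially Triebel's argument) is a bootstrap rather than a direct pointwise bound: set $F(x)=\sup_{y}\frac{|f(x-y)|}{(1+|y|)^{d/r}}$, write
\[
|f(x-y-z)|=|f(x-y-z)|^{1-r}\,|f(x-y-z)|^{r}\le F(x)^{1-r}(1+|y+z|)^{\frac{d}{r}(1-r)}|f(x-y-z)|^{r},
\]
insert this into $|f(x-y)|\le\int|f(x-y-z)||\phi(z)|\,dz$, and then run your dyadic shell estimate on the resulting integral of $|f|^r$. One obtains $F(x)\les F(x)^{1-r}M(|f|^r)(x)$, hence $F(x)\les[M(|f|^r)(x)]^{1/r}$ once you know $F(x)<\infty$ (which follows since band-limited $L^p$ functions are bounded). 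Your dyadic decomposition is the right endgame; what is missing is this self-improving step that replaces the incorrect Jensen argument in the regime $r<1$.
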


The above lemma is useful for linear estimate.  Motivated by \cite{GL}, we derive the following improvement which is useful for the multilinear estimates. 

\begin{lem} \label{lem:keyesti}
Let $L>0$, $j,k\in \mathbb{Z}$, $j>k-L$ and $r\in (0,1]$. Assume $\psi_k \in L^1(\R^d)$  satisfies for some $A>0$ and any $f\in L_{loc}^1(\R^d)$
\begin{align} \label{decayhy}
\aabs{[|\psi_k(y)|(1+|2^k y|)^{(\frac{1}{r}-1)d}]*f(x)}\leq A \cdot M(f)(x),\quad \forall x\in \R^d.
\end{align}
Then there exists a constant $C=C(A,L)$ such that
\begin{align} \label{convomaxi}
|(\psi_k * f)(x)| \leq C 2^{(j-k){(\frac{1}{r}-1)d} }  [ M(|f|^r)(x)]^{\frac{1}{r}}
\end{align}
holds for all $f$ with $\supp \hat{f}\subset B(c2^j)$.
\end{lem}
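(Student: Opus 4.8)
The plan is to reduce the assertion to the hypothesis \eqref{decayhy} applied not to $f$ itself but to $|f|^{r}$, with Lemma~\ref{lem: scalarmaxiesti} serving as the bridge between the two. Since $\supp\hat f\subset B(c2^{j})$, Lemma~\ref{lem: scalarmaxiesti} with $R=c2^{j}$ gives the pointwise bound
\[
|f(x-y)|\le C\,(1+|c2^{j}y|)^{d/r}\,[M(|f|^{r})(x)]^{1/r},\qquad\forall\,x,y\in\R^{d}.
\]
Starting from $|(\psi_k*f)(x)|\le\int|\psi_k(y)|\,|f(x-y)|\,dy$, I would split $|f(x-y)|=|f(x-y)|^{1-r}|f(x-y)|^{r}$ and estimate only the factor $|f(x-y)|^{1-r}$ by the bound above. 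The point of this choice of exponents is purely arithmetic: the polynomial weight produced has exponent $(1-r)\cdot\frac dr=(\frac1r-1)d$, which is exactly the weight appearing in \eqref{decayhy}, while the powers of $M(|f|^{r})(x)$ combine as $(1-r)\cdot\frac1r+1=\frac1r$. This yields
\[
|(\psi_k*f)(x)|\le C\,[M(|f|^{r})(x)]^{\frac1r-1}\int|\psi_k(y)|\,(1+|c2^{j}y|)^{(\frac1r-1)d}\,|f(x-y)|^{r}\,dy .
\]

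It then remains to transfer the weight from scale $2^{-j}$ to the scale $2^{-k}$ at which $\psi_k$ lives. From $1+AB\le(1+A)(1+B)$ together with the hypothesis $j>k-L$ one gets $1+|c2^{j}y|\le C(c,L)\,2^{j-k}(1+|2^{k}y|)$ for all $y\in\R^{d}$; this is elementary when $j\ge k$, and for $k-L<j<k$ it is precisely the step that forces the constant to depend on $L$. Raising to the power $(\frac1r-1)d\ge 0$ and substituting, the last integral becomes the convolution of $y\mapsto|\psi_k(y)|(1+|2^{k}y|)^{(\frac1r-1)d}$ with $|f|^{r}$, evaluated at $x$; by \eqref{decayhy} applied to the function $|f|^{r}$ (which lies in $L^{1}_{loc}$, being locally bounded as $f$ is band-limited) this is at most $A\,M(|f|^{r})(x)$. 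Collecting the accumulated constants and the two factors $[M(|f|^{r})(x)]^{\frac1r-1}$ and $M(|f|^{r})(x)$ gives \eqref{convomaxi} with $C=C(A,L)$ (also depending on the fixed parameters $d,r,c$).

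I do not anticipate a genuine obstacle: once one hits on the interpolation-type splitting $|f|=|f|^{1-r}|f|^{r}$, everything else is bookkeeping. The two points needing a little care are (i) the legitimacy of applying \eqref{decayhy} to $|f|^{r}$, which is immediate since $|f|^{r}\le 1+|f|\in L^{1}_{loc}$, and (ii) the weight-transfer inequality in the regime $j<k$, which is exactly where the assumption $j>k-L$ and the $L$-dependence of the constant enter; note also that when $r=1$ the weight degenerates and the statement reduces directly to \eqref{decayhy}.
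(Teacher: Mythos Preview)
Your proposal is correct and follows essentially the same approach as the paper: both arguments split $|f|=|f|^{1-r}|f|^{r}$, control the $|f|^{1-r}$ factor via Lemma~\ref{lem: scalarmaxiesti}, transfer the weight from scale $2^{j}$ to scale $2^{k}$ using $j>k-L$, and then invoke the hypothesis~\eqref{decayhy} on $|f|^{r}$. The only cosmetic difference is that the paper first rescales the integration variable by $2^{-k}$ before carrying out these steps, whereas you work directly with the original variable; the mathematical content is identical.
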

\begin{proof}
We have 
\EQ{
&|(\psi_k*f)(x)| \\
\leq& \int_{\R^d} |\psi_k(y)|\cdot |f(x-y) | dy  \\
\leq& \int_{\R^d} 2^{-kd}  |\psi_k(2^{-k}y)|\cdot |f(x-2^{-k}y) | dy  \\
\leq& \int_{\R^d} 2^{-kd}  |\psi_k(2^{-k}y)|\cdot |f(x-2^{-k}y)|^{r} (1+2^{j-k}|y|)^{(\frac{1}{r}-1)d}
\cdot \sup_{y\in \R^d} \frac{|f(x-2^{-k}y)|^{1-r}}{(1+2^{j-k}|y|)^{(\frac{1}{r}-1)d}} dy.
}
In view of Lemma \ref{lem: scalarmaxiesti}, one can see
\[
\sup_{y\in \R^d} \frac{|f(x-2^{-k}y)|^{1-r}}{(1+2^{j-k}|y|)^{(\frac{1}{r}-1)d}}
\leq C[ M(|f|^r)(x)]^{\frac{1-r}{r}}.
\]
Then we get
\EQ{
&|(\psi_k*f)(x)| \\
\les& \int_{\R^d} 2^{-kd}  |\psi_k(2^{-k}y)|\cdot |f(x-2^{-k}y)|^{r} (1+2^{j-k}|y|)^{(\frac{1}{r}-1)d}dy\cdot [ M(|f|^r)(x)]^{\frac{1-r}{r}}\\
\les& 2^{(j-k)(\frac{1}{r}-1)d}\int_{\R^d} 2^{-kd}  |\psi_k(2^{-k}y)|\cdot |f(x-2^{-k}y)|^{r} (1+|y|)^{(\frac{1}{r}-1)d}dy\cdot [ M(|f|^r)(x)]^{\frac{1-r}{r}}\\
\les& 2^{(j-k)(\frac{1}{r}-1)d}\int_{\R^d}  |\psi_k(y)|\cdot |f(x-y)|^{r} (1+|2^k y|)^{(\frac{1}{r}-1)d}dy\cdot [ M(|f|^r)(x)]^{\frac{1-r}{r}}\\
\les& 2^{(j-k)(\frac{1}{r}-1)d}  [ M(|f|^r)(x)]^{\frac{1}{r}}.
}
Thus we complete the proof.
\end{proof}

\begin{remark}

\eqref{convomaxi} is crucial for us deriving the nonlinear etimates in the next subsection. By \eqref{convomaxi} we obtain for $r<1$
\EQ{\label{eq:hhimprove}
|\psi_k*(\dot P_j f \cdot \dot P_j g)|\les 2^{(j-k)d(\frac{1}{r}-1)} \big[M(|\dot P_j f\cdot \dot P_j g|^r)(x)\big]^{\frac{1}{r}}.
}
In particular, the above estimate is useful to handle the high-high to low frequency interactions. 
\end{remark}

A key ingredient in our proof is the boundedness of the Schr\"odinger propagator on Hardy space $\Hl^1$.  This was proved by Miyachi \cite{Miyachi}. 

\begin{lem}[Hardy space boundness]\label{lem:Sch}
We have
\EQ{
\|e^{it\Delta}f\|_{F^0_{1,2}(\R^d)}\les& (1+|t|)^{d/2}\|f\|_{F_{1,2}^{d}(\R^d)},\\
\|e^{it\Delta}P_{\leq 0}f\|_{\dot F^0_{1,2}(\R^d)}\les& (1+|t|)^{d/2}\|f\|_{\dot F^0_{1,2}(\R^d)}.
}
\end{lem}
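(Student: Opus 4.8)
The plan is to reduce both inequalities to Miyachi's oscillating-multiplier theorem \cite{Miyachi}, used in the form: the Fourier multiplier with symbol $e^{i|\xi|^2}(1+|\xi|^2)^{-d/2}$ is bounded on $h^1=F^0_{1,2}$ and on $\Hl^1=\dot F^0_{1,2}$, equivalently $\norm{e^{i\Delta}g}_{F^0_{1,2}}\lesssim\norm{g}_{F^d_{1,2}}$. First I would record two consequences of this. (i) The family $\{e^{-it|\xi|^2}(1+|\xi|^2)^{-d/2}:|t|\le1\}$ lies in a bounded subset of Miyachi's class, whence $\norm{e^{it\Delta}g}_{F^0_{1,2}}\lesssim\norm{g}_{F^d_{1,2}}$ uniformly for $|t|\le1$. (ii) Since $(1+|\xi|^2)^{d/2}|\xi|^{-d}$ is a H\"ormander--Mikhlin multiplier on $\{|\xi|\ge1\}$ and Mikhlin multipliers act boundedly on $\Hl^1$, composing with Miyachi's operator gives $\norm{e^{\pm i\Delta}h}_{\dot F^0_{1,2}}\lesssim\norm{h}_{\dot F^d_{1,2}}$ for every $h$ with $\supp\hat h\subset\{|\xi|\ge1\}$. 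The other tools are purely bookkeeping: the scaling identity $e^{it\Delta}g=\bigl(e^{i\sgn(t)\Delta}(g_{|t|^{1/2}})\bigr)_{|t|^{-1/2}}$ with $g_\lambda:=g(\lambda\,\cdot)$, the homogeneity $\norm{g_\lambda}_{\dot F^s_{1,2}}=\lambda^{s-d}\norm{g}_{\dot F^s_{1,2}}$, the comparability of the $F^0_{1,2}$- and $L^1$-norms on functions of frequency $\lesssim1$, and of the $F^s$- and $\dot F^s$-norms on functions of frequency $\gtrsim1$.

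For the first estimate with $|t|\ge1$, split $f=P_0f+P_{>0}f$. The operator $e^{it\Delta}P_0$ is convolution of $P_0(\,\cdot\,)$ against a rescaled annular Schr\"odinger kernel whose $L^1$-norm is $\lesssim(1+|t|)^{d/2}$ by a non-degenerate stationary-phase bound; as $e^{it\Delta}P_0f$ has frequency $\lesssim1$, this yields $\norm{e^{it\Delta}P_0f}_{F^0_{1,2}}\sim\norm{e^{it\Delta}P_0f}_{L^1}\lesssim(1+|t|)^{d/2}\norm{P_0f}_{L^1}\lesssim(1+|t|)^{d/2}\norm{f}_{F^d_{1,2}}$. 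Since $e^{it\Delta}P_{>0}f$ has frequency $\gtrsim1$, its $F^0_{1,2}$-norm may be replaced by its $\dot F^0_{1,2}$-norm, and the scaling identity with the homogeneity gives $\norm{e^{it\Delta}P_{>0}f}_{\dot F^0_{1,2}}=|t|^{d/2}\normb{e^{i\sgn(t)\Delta}\bigl((P_{>0}f)_{|t|^{1/2}}\bigr)}_{\dot F^0_{1,2}}$; here $(P_{>0}f)_{|t|^{1/2}}$ has frequency $\gtrsim|t|^{1/2}\ge1$, so consequence (ii) bounds the right side by $|t|^{d/2}\norm{(P_{>0}f)_{|t|^{1/2}}}_{\dot F^d_{1,2}}=|t|^{d/2}\norm{P_{>0}f}_{\dot F^d_{1,2}}\lesssim|t|^{d/2}\norm{f}_{F^d_{1,2}}$. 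With (i) for $|t|\le1$ this proves the first estimate.

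For the second estimate, $e^{it\Delta}P_{\le0}f$ has frequency $\lesssim1$; for $|t|\le1$ the symbol $e^{-it|\xi|^2}\eta(\xi)$ is, uniformly, a Mikhlin multiplier, hence bounded on $\dot F^0_{1,2}$. For $|t|\ge1$ I would use the scaling identity once more, $\norm{e^{it\Delta}P_{\le0}f}_{\dot F^0_{1,2}}=|t|^{d/2}\normb{e^{i\sgn(t)\Delta}\bigl((P_{\le0}f)_{|t|^{1/2}}\bigr)}_{\dot F^0_{1,2}}$, and split $(P_{\le0}f)_{|t|^{1/2}}=u+v$ into its parts of frequency $\le1$ and of frequency $\ge1$. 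On $u$, $e^{\pm i\Delta}$ acts as a Mikhlin multiplier, so $\norm{e^{\pm i\Delta}u}_{\dot F^0_{1,2}}\lesssim\norm{(P_{\le0}f)_{|t|^{1/2}}}_{\dot F^0_{1,2}}=|t|^{-d/2}\norm{P_{\le0}f}_{\dot F^0_{1,2}}$, contributing $\lesssim\norm{f}_{\dot F^0_{1,2}}$ after the prefactor. On $v$, whose frequency lies in $[1,|t|^{1/2}]$, consequence (ii) together with $\norm{v}_{\dot F^d_{1,2}}\lesssim|t|^{d/2}\norm{v}_{\dot F^0_{1,2}}\lesssim|t|^{d/2}\cdot|t|^{-d/2}\norm{P_{\le0}f}_{\dot F^0_{1,2}}$ yields a contribution $\lesssim|t|^{d/2}\norm{f}_{\dot F^0_{1,2}}$. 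Summing the two pieces gives the second estimate.

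The only genuinely hard input is Miyachi's theorem, i.e.\ the case $t=\pm1$; the rest is scaling and the low-versus-high frequency split. It is worth stressing why this cannot be done by a Calder\'on--Zygmund or naive maximal-function argument: at frequency $2^k$ the physical-space kernel of $e^{it\Delta}$ is spread over a ball of radius $\sim2^k|t|$, far larger than its $\sim2^{-k}$ oscillation scale, so dominating it by a constant multiple of the Hardy--Littlewood maximal function costs a factor $\sim(1+2^{2k}|t|)^{d/2}$ on each frequency block, and reassembling the resulting $\ell^2$ square function inside $L^1$ would require the vector-valued maximal inequality at $p=1$, which fails (see the remark after Lemma \ref{lem:maxi}). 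Miyachi's proof bypasses this by an atomic-decomposition argument that plays the cancellation of $\Hl^1$-atoms directly against the oscillation of the Schr\"odinger kernel.
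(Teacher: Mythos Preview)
Your argument is correct. The paper does not actually give a proof of this lemma: it simply records the two estimates and attributes them to Miyachi \cite{Miyachi}, so there is no ``paper's proof'' to compare against at the level of detail you have written. What you have supplied is precisely the standard way one extracts the $t$-dependent statement from Miyachi's theorem (the case $t=\pm 1$): use the dilation identity $e^{it\Delta}g=\bigl(e^{i\sgn(t)\Delta}g_{|t|^{1/2}}\bigr)_{|t|^{-1/2}}$, the homogeneity $\norm{g_\lambda}_{\dot F^s_{1,2}}=\lambda^{s-d}\norm{g}_{\dot F^s_{1,2}}$, and a low/high frequency split to trade between the inhomogeneous and homogeneous norms. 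Your consequences (i) and (ii) are the right reductions, and the bookkeeping in both estimates checks out (in particular, the step $\norm{v}_{\dot F^d_{1,2}}\lesssim |t|^{d/2}\norm{v}_{\dot F^0_{1,2}}$ for $v$ supported at frequencies $\lesssim |t|^{1/2}$, and the Mikhlin boundedness of $e^{\pm i|\xi|^2}\eta(\xi)$ on $\Hl^1$ for the very low frequency pieces). The closing paragraph on why a naive maximal-function approach fails is accurate commentary but not needed for the proof itself.
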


The second ingredient is the nonlinear estimates in Hardy space.   First we derive a Leibniz rule in the Hardy space.

\begin{lem}\label{lem:Leib}
Let $s>0$, $p_i, q_i \in (1,\infty)$, $\frac{1}{p_i}+\frac{1}{q_i}=1$, $i=1,2$.  Then
\begin{align}
\|D^s(fg)\|_{\Hl^1}\les \norm{D^sf}_{L^{p_1}}\norm{g}_{L^{q_1}}+\norm{f}_{L^{p_2}}\norm{D^sg}_{L^{q_2}}.
\end{align}
\end{lem}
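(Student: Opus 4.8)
The plan is to prove the Leibniz rule by a Littlewood–Paley decomposition of the product $fg$ and estimating each frequency piece in the $\dot F^0_{1,2}\sim \Hl^1$ norm. Write $fg = \sum_{j} \dot P_j(fg)$ and decompose the product into the usual three pieces: $fg = \Pi_1(f,g) + \Pi_1(g,f) + \Pi_2(f,g)$, where $\Pi_1(f,g)=\sum_k \dot P_{\sim k}f\cdot \dot P_{\ll k}g$ collects the high–low interactions, and $\Pi_2(f,g)=\sum_k \dot P_{\sim k}f\cdot \dot P_{\sim k}g$ collects the high–high interactions. By symmetry it suffices to treat $\Pi_1$ and $\Pi_2$. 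The target is to bound $\norm{D^s(fg)}_{\Hl^1} \sim \norm{2^{sk}\dot P_k(fg)}_{L^1_x l^2_k}$.

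For the high–low piece $D^s\Pi_1(f,g)$, the output frequency $2^k$ is comparable to the frequency of the $f$-factor, so $D^s$ essentially lands on $f$. I would write $\dot P_k(\Pi_1(f,g)) = \sum_{|k-k'|\le C}\dot P_k(\dot P_{k'}f\cdot \dot P_{<k'-C}g)$, pull out a factor $2^{sk}\sim 2^{sk'}$, and use the pointwise bound $|\dot P_{<k'-C}g(x)|\lesssim M(g)(x)$ (Lemma~\ref{radialmaxicon}) together with $|\dot P_k h(x)|\lesssim M(h)(x)$ to get
\EQ{
2^{sk}|\dot P_k\Pi_1(f,g)(x)| \lesssim \sum_{|k-k'|\le C} M\big(2^{sk'}\dot P_{k'}f\big)(x)\cdot M(g)(x).
}
Taking $l^2_k$ and then $L^1_x$, Hölder with exponents $p_1,q_1$ gives $\norm{M(2^{sk'}\dot P_{k'}f)}_{L^{p_1}_x l^2_{k'}}\norm{Mg}_{L^{q_1}}$, and the vector-valued maximal inequality (Lemma~\ref{lem:maxi}, valid since $p_1,q_1>1$) together with $\norm{2^{sk'}\dot P_{k'}f}_{L^{p_1}l^2}\sim\norm{D^sf}_{L^{p_1}}$ (Littlewood–Paley square function) closes this term by $\norm{D^sf}_{L^{p_1}}\norm{g}_{L^{q_1}}$. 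The symmetric term gives the $\norm{f}_{L^{p_2}}\norm{D^sg}_{L^{q_2}}$ contribution.

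The high–high piece is the main obstacle, since here the output frequency $2^k$ can be much smaller than the common frequency $2^j$ of the two factors, so $D^s$ with $s>0$ does not see the full frequency $2^j$; naively one loses $2^{s(j-k)}$. This is exactly where the improved maximal estimate \eqref{eq:hhimprove} from Lemma~\ref{lem:keyesti} is needed: for $r\in(0,1)$ chosen close to $1$,
\EQ{
2^{sk}|\dot P_k(\dot P_j f\cdot \dot P_j g)(x)| \lesssim 2^{sk}2^{(j-k)d(\frac1r-1)}\big[M(|\dot P_j f\cdot \dot P_j g|^r)(x)\big]^{1/r}.
}
Choosing $r$ so that $d(\tfrac1r-1)<s$ makes the geometric factor $2^{sk-sj}2^{(j-k)d(1/r-1)}$ sum over $k\le j+C$, leaving $\lesssim 2^{-\e(j-k)}\cdot 2^{sj}[M(|\dot P_jf\cdot \dot P_jg|^r)]^{1/r}$ for some $\e>0$. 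After summing the geometric series in $k$ and applying Minkowski/Hölder in $l^2_j$, one is reduced to bounding $\norm{(M(|2^{sj}\dot P_jf\cdot \dot P_j g|^r))^{1/r}}_{L^1_x l^2_j}$. Since $r<1$, $M$ acts on $L^{1/r}$-type objects where the vector-valued maximal inequality applies (after raising to the $r$ power the relevant exponents are $>1$), giving $\norm{2^{sj}\dot P_jf\cdot \dot P_jg}_{L^1_x l^2_j}$; splitting the $2^{sj}$ onto one factor and applying Hölder in $x$ with $(p_i,q_i)$ and in $j$ with $l^2\cdot l^\infty$ finishes the estimate, using $\norm{\dot P_j g}_{L^{q_1}l^\infty_j}\lesssim \norm{g}_{L^{q_1}}$ (maximal inequality again) and the square function characterization for the $D^sf$ factor. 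Assembling the three pieces yields the claimed inequality.
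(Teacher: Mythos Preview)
Your approach is the same as the paper's: paraproduct decomposition, maximal-function control of the low-frequency factor, and Lemma~\ref{lem:keyesti} (i.e.\ \eqref{eq:hhimprove}) with $r<1$ chosen so that $d(\tfrac1r-1)<s$ for the high--high piece. Your treatment of $\Pi_2$ coincides with the paper's term $III$ line by line.

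There is one slip in your high--low estimate. From $|\dot P_k h|\lesssim Mh$ you only get $M\big(\dot P_{k'}f\cdot \dot P_{<k'-C}g\big)(x)$, and the maximal function is not submultiplicative, so the claimed bound $M(\dot P_{k'}f)(x)\cdot M(g)(x)$ does not follow from the two ingredients you cite. This is exactly why the paper also invokes Lemma~\ref{lem:keyesti} for the high--low term (with, say, $r=\tfrac12$; here $j\sim k$ so the factor $2^{(j-k)d(1/r-1)}$ is harmless): one gets
\[
|\dot P_m(\dot P_m f\cdot \dot P_{\le m-5}g)|\lesssim \big[M(|\dot P_m f\cdot \dot P_{\le m-5}g|^{1/2})\big]^{2},
\]
takes $L^1_x l^2_m$, rewrites it as an $L^2_x l^4_m$ norm squared, applies the vector-valued maximal inequality (now both exponents exceed $1$), and \emph{only after} removing the outer $M$ uses $|\dot P_{\le m-5}g|\lesssim Mg$ and H\"older. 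An equivalent repair of your argument is to use Lemma~\ref{lem: scalarmaxiesti} on the band-limited factor $\dot P_{k'}f$ to pull it outside the $\dot P_k$-convolution: since the remaining kernel $|\psi_k(z)|(1+2^k|z|)^d$ is still an $L^1$-normalized bump, this yields $|\dot P_k(\dot P_{k'}f\cdot \dot P_{<k'-C}g)|\lesssim M(\dot P_{k'}f)\cdot M(\dot P_{<k'-C}g)\lesssim M(\dot P_{k'}f)\cdot M(Mg)$, and the rest of your argument goes through verbatim with $M(Mg)$ in place of $Mg$.
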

\begin{proof}
By Bony's paraproduct decomposition, we have
\EQ{
fg=&\sum_{|j-m|\leq 3}\dot P_m(\dot P_j f\cdot \dot P_{\leq m-5} g)+\sum_{|j-m|\leq 3}\dot P_m(\dot P_{\leq m-5} f\cdot \dot P_{j} g)+\sum_{j\geq m-3}\dot P_m(\dot P_{j} f\cdot \dot P_{j} g)\\
:=&I+II+III.
}
For the term $I$, by Lemma \ref{lem:keyesti} we have
\EQ{
\norm{D^s(I)}_{\Hl^1}\les& \normb{\norm{2^{ms}\dot P_m(\dot P_m f\cdot \dot P_{\leq m-5} g)}_{l_m^2}}_{L^1}\\
\les& \normb{\norm{M(|2^{ms} \dot P_m f\cdot \dot P_{\leq m-5} g|^{1/2})^2}_{l_m^2}}_{L^1}\\
\les& \normb{\norm{2^{ms} \dot P_m f}_{l_m^2}M(g)}_{L^1}\les \norm{D^s f}_{L^{p_i}}\norm{g}_{L^{q_i}}.
}
The term $II$ is similar as the term $I$ and we can get
\EQ{
\norm{D^s(II)}_{\Hl^1}
\les&  \norm{f}_{L^{p_i}}\norm{D^sg}_{L^{q_i}}.
}
For the term $III$, applying \eqref{eq:hhimprove} by taking $r<1$ such that $ d(1/r-1)<s$, we get
\EQ{
\norm{D^s(III)}_{\Hl^1}\les& \normb{\norm{\sum_{j\geq m-3}  2^{(m-j)s}\dot P_m(2^{js}\dot P_j f\cdot \dot P_j g)}_{l_{m}^2}}_{L^1}\\
\les& \normb{\norm{\sum_{j\geq m-3}  2^{(m-j)s} 2^{(j-m)d(1/r-1)} [M(|2^{js}\dot P_j f\cdot \dot P_j g|^{r})]^{1/r}}_{l_{m}^2}}_{L^1}\\
\les& \normb{M(|2^{js}\dot P_j f\cdot \dot P_j g|^{r})}^{1/r}_{L^{1/r}l_j^{2/r}}.
}
Then by  Lemma \ref{lem:maxi} and the H\"older inequality we have
\EQ{
\norm{D^s(III)}_{\Hl^1}\les&  \normb{2^{js}\dot P_j f\cdot M g}_{L^1l_j^2}\\
\les&\norm{D^s f}_{L^{p_i}}\norm{g}_{L^{q_i}}.
}
Therefore, we complete the proof.
\end{proof}

\begin{remark}
It seems to us Lemma \ref{lem:Leib} is new\footnote{By private communication, the authors learned that some generalized Leibniz rules in Hardy space were obtained independently in \cite{LX}}.  The fractional Leibniz rule has been extensively studied, see \cite{DLi} for a comprehensive survey of the current results.  The classical inequality reads
\EQ{\label{eq:Leibniz2}
\norm{D^s(fg)}_{r}\les \norm{D^s f}_{p_1}\norm{g}_{q_1}+\norm{f}_{p_2}\norm{D^s g}_{q_2}
}
where $s>0$, $1<p_i,q_i\leq \infty$ with $1/r=1/p_i+1/q_i$: $i=1,2$, and $1/(1+s)<r\leq \infty$.

The same argument of the proof of Lemma \ref{lem:Leib} can be applied to the Hardy space boundedness of a class of bilinear operators.  This  is known as compensated compactness.   See \cite{CLMS}.  Consider the bilinear operator of the form
\EQ{
B_m(f,g)(x)=\int_{\R^d\times \R^d} m(\xi_1,\xi_2)\hat f(\xi_1)\hat g(\xi_2) e^{ix(\xi_1+\xi_2)}d\xi_1 d\xi_2.
}
We decompose 
\EQ{m=m_{HL}+m_{LH}+m_{HH},} 
where 
\EQ{
m_{HL}(\xi,\eta)=&\sum_{j,k:j\geq k+5}\chi_j(\xi)\chi_k(\eta)\\
m_{LH}(\xi,\eta)=&\sum_{j,k:k\geq j+5}\chi_j(\xi)\chi_k(\eta)\\
m_{HH}(\xi,\eta)=&\sum_{j,k:|j-k|\leq 4}\chi_j(\xi)\chi_k(\eta).
}
By the similar argument as proving Lemma \ref{lem:Leib}, we can obtain the following proposition, although we will not need it in this note.

\begin{prop}
Assume $B_{\sigma}(f,g): L^2(\R^d)\times L^2(\R^d)\to L^1(\R^d)$ is a bounded bilinear operator, for $\sigma\in \{m_{HL}, m_{LH}, m_{HH}\}$, and some cancellation property: there exists $\gamma>0$ such that 
\EQ{
\norm{\dot P_k B_m(\dot P_j f, \dot P_l g)}_{L^1(\R^d)}\les 2^{(k-j)\gamma}\norm{\dot P_j f}_{L^2}  \norm{\dot P_l g}_{L^2}
}
for all $k, j, l\in \Z$ with $2^k\ll 2^j\sim 2^l$.  Then
$\norm{B_m(f, g)}_{\Hl^1(\R^d)}\les  \norm{f}_{L^2}  \norm{g}_{L^2}$.
\end{prop}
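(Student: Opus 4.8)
The plan is to mimic the proof of Lemma~\ref{lem:Leib} block by block. Write $B_m(f,g)=B_{m_{HL}}(f,g)+B_{m_{LH}}(f,g)+B_{m_{HH}}(f,g)$ and estimate each piece in $\Hl^1\sim\dot{F}^{0}_{1,2}$, with $B_{m_{HL}},B_{m_{LH}}$ playing the role of the terms $I,II$ and $B_{m_{HH}}$ the role of $III$. I will use two reductions repeatedly. \emph{(R1)} If $F=\sum_j F_j$ with $\supp\widehat{F_j}\subset\{|\xi|\sim 2^j\}$ and bounded overlaps, then the argument for term $III$ of Lemma~\ref{lem:Leib} --- apply $\dot{P}_n$; for the $O(1)$ indices $j$ with $|j-n|$ bounded use Lemma~\ref{lem:keyesti} with a fixed $r\in(0,1)$ to get $|\dot{P}_n F_j|\les[M(|F_j|^r)]^{1/r}$; square, sum in $n$, and apply Lemma~\ref{lem:maxi} with the exponents $1/r$ and $2/r$ (both $>1$) --- gives $\norm{F}_{\dot{F}^{0}_{1,2}}\les\norm{\,\norm{F_j}_{\ell^2_j}\,}_{L^1}$. \emph{(R2)} A function $H$ whose Fourier transform is supported in a dyadic annulus $\{|\xi|\sim 2^k\}$ obeys $\norm{H}_{\Hl^1}\les\norm{H}_{L^1}$ uniformly in $k$, since only $O(1)$ Littlewood--Paley pieces of $H$ are nonzero and each $\dot{P}_n$ is bounded on $L^1$.

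\textbf{The high-high term.} Since $B_m(\dot{P}_j f,\dot{P}_l g)$ with $|j-l|\le 4$ has Fourier support in $\{|\xi|\les 2^j\}$, I group by output frequency and write $B_{m_{HH}}(f,g)=\sum_k H_k$ with $H_k=\sum_{j\ge k-C}\dot{P}_k B_m(\dot{P}_j f,\dot{P}_{\sim j}g)$ localized to $\{|\xi|\sim 2^k\}$. By the triangle inequality in $\Hl^1$ and (R2), $\norm{B_{m_{HH}}(f,g)}_{\Hl^1}\le\sum_k\norm{H_k}_{\Hl^1}\les\sum_k\norm{H_k}_{L^1}$. Bounding $\norm{H_k}_{L^1}$ term by term --- by the cancellation hypothesis when $k$ is much smaller than $j$, and by the plain $L^2\times L^2\to L^1$ bound for the $O(1)$ indices $k\sim j$ --- gives $\norm{H_k}_{L^1}\les\sum_{j\ge k-C}2^{(k-j)\gamma}\norm{\dot{P}_j f}_{L^2}\norm{\dot{P}_{\sim j}g}_{L^2}$; summing the geometric series in $k$ and then Cauchy--Schwarz in $j$ yields $\sum_k\norm{H_k}_{L^1}\les\norm{f}_{L^2}\norm{g}_{L^2}$.

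\textbf{The high-low (and low-high) terms.} The output of $B_{m_{HL}}$ inherits the high frequency of $f$: with $b_n:=\dot{P}_n B_{m_{HL}}(f,g)=\sum_{|j-n|\le C}\dot{P}_n B_m(\dot{P}_j f,\dot{P}_{<j}g)$, each $b_n$ is localized to $\{|\xi|\sim 2^n\}$ and $B_{m_{HL}}(f,g)=\sum_n b_n$, so (R1) gives $\norm{B_{m_{HL}}(f,g)}_{\Hl^1}\les\norm{\,\norm{b_n}_{\ell^2_n}\,}_{L^1}$. To close I want the analogue of the decisive step for term $I$ of Lemma~\ref{lem:Leib}: a pointwise estimate $|b_n(x)|\les\big[M(|\dot{P}_n f\cdot\varphi_n|^r)(x)\big]^{1/r}$ with $r\in(0,1)$ and functions $\varphi_n$ obeying $\sup_n|\varphi_n|\les M(g)$ pointwise (in the model case $B_m(f,g)=fg$ one takes $\varphi_n=\dot{P}_{<n}g$). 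Granting this, the remainder is verbatim term $I$: peel off $\sup_n|\varphi_n|\les M(g)$ using Lemma~\ref{lem:maxi} (again with exponents $1/r,2/r$), then H\"older gives $\norm{\,\norm{b_n}_{\ell^2_n}\,}_{L^1}\les\norm{M(g)\big(\sum_n|\dot{P}_n f|^2\big)^{1/2}}_{L^1}\les\norm{M(g)}_{L^2}\norm{f}_{L^2}\les\norm{g}_{L^2}\norm{f}_{L^2}$. The term $B_{m_{LH}}$ is identical with $f$ and $g$ interchanged.

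I expect the main obstacle to be precisely this pointwise domination of the low-frequency slot by $M(g)$ for a general bilinear multiplier: unlike in Lemma~\ref{lem:Leib}, $B_m$ need not be a literal product, and one must exploit that $\dot{P}_{<n}g$ is smooth at scale $2^{-n}$ --- via an off-diagonal size and regularity bound on the kernel of $B_{m_{HL}}$ together with Lemma~\ref{radialmaxicon} and Lemma~\ref{lem: scalarmaxiesti} --- to reduce to the paraproduct case. For the div-curl or Jacobian-type multipliers that motivate the statement this is immediate, since there $B_{m_{HL}}$ is, on each dyadic block, a product of Littlewood--Paley-type multiplier pieces, i.e. essentially the paraproduct $\sum_n\dot{P}_n f\cdot\dot{P}_{<n}g$. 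The high-high term, by contrast, needs no such structural input once the cancellation hypothesis is granted; the gain $2^{(k-j)\gamma}$ is exactly what promotes such nonlinearities from $L^1$ into $\Hl^1$.
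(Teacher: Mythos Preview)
Your high--high argument is fine and is exactly how the cancellation hypothesis is meant to be used: bound $\ell^2_k$ by $\ell^1_k$, use (R2), apply the hypothesis, sum the geometric series, then Cauchy--Schwarz in $j$.

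The gap you flag in the high--low piece is real for the route you chose, but the route itself is unnecessarily hard. You do not need any pointwise domination by $M(g)$, and hence no kernel structure on $B_m$ beyond what is assumed. After your reduction (R1) you have
\[
\norm{B_{m_{HL}}(f,g)}_{\Hl^1}\ \les\ \normb{\,\norm{B_m(\dot P_j f,\dot P_{\le j-5}g)}_{\ell^2_j}}_{L^1},
\]
and Minkowski ($L^1\ell^2\le \ell^2 L^1$) gives
\[
\les\ \Big(\sum_j \norm{B_m(\dot P_j f,\dot P_{\le j-5}g)}_{L^1}^2\Big)^{1/2}.
\]
Now the assumed $L^2\times L^2\to L^1$ bound for $B_{m_{HL}}$ (applied to the pair $\dot P_j f,\ \dot P_{\le j-5}g$, which lies in the HL regime) yields $\norm{B_m(\dot P_j f,\dot P_{\le j-5}g)}_{L^1}\les \norm{\dot P_j f}_{L^2}\norm{g}_{L^2}$, and square--summing in $j$ gives $\norm{f}_{L^2}\norm{g}_{L^2}$. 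The LH piece is symmetric. In fact (R1) is not even needed here: Minkowski together with the uniform $L^1$--boundedness of $\dot P_n$ already produces the $\ell^2_j L^1$ bound.

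The reason the paper's proof of Lemma~\ref{lem:Leib} uses the pointwise maximal estimate for term $I$ is that the target there is $\norm{D^s f}_{L^{p_1}}\norm{g}_{L^{q_1}}$: the Minkowski route would land on $\norm{f}_{\dot B^s_{p_1,2}}$ rather than $\norm{f}_{\dot F^s_{p_1,2}}$, which is too weak when $p_1>2$. For the proposition the target is $L^2$, where $\dot B^0_{2,2}=\dot F^0_{2,2}=L^2$, so this obstruction disappears. That is what ``the similar argument'' means once adapted to the $L^2\times L^2$ setting.
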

\end{remark}

Besides the fractional Leibniz rule, the fractional chain rule is also important in applications, especially in dealing with the non-algebraic nonlinearity $F(u)=|u|^{p-1}u$ when $p$ is not odd. The classical fractional chain rule says (see Tao \cite{Taonote}): if $s\in (0,1)$, $1<m,t,q<\infty$ with $\frac{1}{m}=\frac{p-1}{t}+\frac{1}{q}$
\EQ{
\|D^sF(u)\|_{L^m(\R^d)}\lesssim \norm{u}_{L^t}^{p-1}\norm{D^s u}_{L^q}.
}
In the lemma below, we extend the fractional chain rule to the Hardy space when $m=1$, although we do not need it in this note. 

\begin{lem}[Fractional chain rule in Hardy space]\label{lem:fracChain}
Let $F(u)$ be a power-type nonlinearity with exponent $p\geq 1$, namely of the form $|u|^p$. Assume $s\in (0,1)$, $1<t,q<\infty$ with $1=\frac{p-1}{t}+\frac{1}{q}$. Then
\begin{align}
\|D^sF(u)\|_{F^0_{1,2}(\R^d)}\lesssim \norm{u}_{L^t}^{p-1}\norm{D^s u}_{L^q}.
\end{align}
\end{lem}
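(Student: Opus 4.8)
The plan is to mimic the proof of Lemma \ref{lem:Leib}, replacing Bony's bilinear paraproduct by the analogous Littlewood--Paley decomposition of $F(u)$ together with the pointwise difference estimate for power-type nonlinearities. First I would fix the nonlinearity and record the elementary pointwise bound
\EQ{
|F(a)-F(b)|\les (|a|^{p-1}+|b|^{p-1})|a-b|,
}
valid for $p\ge 1$, which underlies every known proof of the fractional chain rule. Since $F^0_{1,2}\sim h^1\hookrightarrow L^1$ and by duality/atomic characterisations it suffices to estimate $\norm{D^s F(u)}_{F^0_{1,2}}=\normb{\norm{2^{ks}P_k F(u)}_{l_k^2}}_{L^1}$, I would split the frequency sum at a fixed scale depending on $x$. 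For a fixed $k$, write $u=u_{\le k}+u_{>k}$ with $u_{\le k}=P_{\le k}u$, and decompose
\EQ{
P_k F(u)=P_k\big(F(u)-F(u_{\le k})\big)+P_k F(u_{\le k}).
}
The first ("high-frequency") piece is handled directly: using the pointwise bound above with $a=u$, $b=u_{\le k}$ one gets $|F(u)-F(u_{\le k})|\les (|u|^{p-1}+|u_{\le k}|^{p-1})|u_{>k}|$ pointwise, and then $|u_{>k}|\les \sum_{j>k}|P_j u|$, so after applying Lemma \ref{lem:keyesti} (to absorb the low-frequency factor into the maximal function, exactly as in term $I$ of Lemma \ref{lem:Leib}) and summing the geometric series $\sum_{j>k}2^{(k-j)s}$ one recovers a bound by $[M(|u|^{p-1}|D^s u|^r)]^{1/r}$-type quantities. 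The second ("low-frequency") piece uses the Bernstein-type gain: $P_k F(u_{\le k})=P_k\big(F(u_{\le k})-F(u_{\le k-1})\big)+\cdots$ telescopes, and the derivative of the smooth truncated function together with the pointwise bound gives $\norm{P_k F(u_{\le k})}$ controlled by $2^{-ks}$ times $[M(|u|^{p-1}|D^s u|^r)]^{1/r}$; here one invokes that $D^s$ applied to the low-frequency part of $u$ at scale $\sim 2^k$ is comparable to $2^{ks}|P_{\sim k}u|$ up to maximal functions, again via Lemma \ref{lem: scalarmaxiesti} or Lemma \ref{lem:keyesti} with $r<1$ chosen so that the powers of $2$ are summable.

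Collecting the two pieces, squaring in $k$ and summing, I expect to arrive at
\EQ{
\norm{2^{ks}P_k F(u)}_{L^1 l_k^2}\les \normb{[M(|u|^{p-1}\cdot |2^{js}P_j u|^r)]^{1/r}}_{L^1 l_j^{2}},
}
at which point, choosing $r<1$, the vector-valued maximal inequality (Lemma \ref{lem:maxi}, applicable since $1/r>1$) removes the maximal operator, and Hölder's inequality in $x$ with exponents $\frac{p-1}{t}+\frac1q=1$ finishes:
\EQ{
\les \norm{|u|^{p-1}}_{L^{t/(p-1)}}\normb{\norm{2^{js}P_j u}_{l_j^2}}_{L^q}\sim \norm{u}_{L^t}^{p-1}\norm{D^s u}_{L^q}.
}

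The main obstacle is the low-frequency piece $P_k F(u_{\le k})$: unlike the bilinear Leibniz rule, $F$ is genuinely nonlinear, so one cannot simply distribute $D^s$, and one must exploit the smoothness of $F(u_{\le k})$ at frequency scale $2^k$ quantitatively --- i.e. bound $\norm{\nabla F(u_{\le k})}$ by $\norm{F'(u_{\le k})}\cdot\norm{\nabla u_{\le k}}$ and then trade one full derivative on $u_{\le k}$ for the fractional $D^s$ via a $2^{k(1-s)}$ Bernstein factor, all while keeping everything inside maximal functions so that no $L^1$-boundedness of $M$ is needed. Managing the restriction $s\in(0,1)$ (so that $F(a)-F(b)$ is only Hölder-$s$-regular, not Lipschitz, in the relevant sense) and choosing $r\in(0,1)$ compatibly with both $s$ and the telescoping sums is the delicate bookkeeping; but the structure is entirely parallel to Lemma \ref{lem:Leib}, and no genuinely new idea beyond the pointwise difference bound for $F$ is required.
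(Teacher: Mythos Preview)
Your overall architecture---split $P_kF(u)$ into $P_k(F(u)-F(u_{\le k}))+P_kF(u_{\le k})$ and finish with the vector-valued maximal inequality---matches the paper. But the high-frequency step, as you have written it, has a real gap. Once you pass to the \emph{pointwise} bound $|F(u)-F(u_{\le k})|\les (|u|^{p-1}+|u_{\le k}|^{p-1})|u_{>k}|$, the right-hand side is no longer frequency-localized, so Lemma~\ref{lem:keyesti} does not apply: from $|P_k g|\le |\psi_k|*|g|$ you only recover the plain maximal function $M\big((Mu)^{p-1}|u_{>k}|\big)$, and the resulting expression cannot be pushed through $L^1l_k^2$. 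The analogy with term~$I$ of Lemma~\ref{lem:Leib} is also misplaced: there the product $\dot P_m f\cdot\dot P_{\le m-5}g$ is genuinely supported in $\{|\xi|\les 2^m\}$, which is precisely what makes Lemma~\ref{lem:keyesti} available. In your piece the output frequency $2^k$ is \emph{below} that of $u_{>k}$, so the correct analogue is the high--high--to--low interaction (term~$III$), not term~$I$.

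The paper repairs this by keeping the exact product identity from the fundamental theorem of calculus,
\[
F(u)-F(u_{\le k})=\Big[\int_0^1 F'(u_{\le k}+\theta u_{>k})\,d\theta\Big]\cdot u_{>k},
\]
then expanding $u_{>k}=\sum_{j>k}\dot P_j u$ and, for each $j$, inserting a matching projection $\dot P_{j'}$ (with $|j'-j|\le 5$) on the $F'$ factor---possible because the output sits at $\sim 2^k\ll 2^j$. Now both factors are localized at $\sim 2^j$ and \eqref{eq:hhimprove} (i.e.\ Lemma~\ref{lem:keyesti}) yields the $[M(|\cdot|^r)]^{1/r}$ bound with the acceptable loss $2^{(j-k)d(1/r-1)}$, summable against $2^{(k-j)s}$ once $d(1/r-1)<s$. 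For the low-frequency piece the paper likewise avoids your telescoping by using the mean-zero of $\dot P_k$ to subtract the constant $F(u_{\le k})(x)$, then the spatial FTC to produce $\nabla\dot P_m u$, which \emph{is} frequency-localized so that Lemma~\ref{lem: scalarmaxiesti} delivers the needed $[M(|\dot P_m u|^r)]^{1/r}$. In short, the missing idea is that in the Hardy-space setting you must preserve an exact product with frequency-localized factors before invoking the $r$-maximal trick; the pointwise difference bound, which suffices in $L^m$ for $m>1$, is too crude here.
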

\begin{proof}
We assume first the pointwise estimate: for $r\in (0,1)$
\EQ{\label{eq:pointwise}
|\dot P_j [F(u)](x)|\les \sum_{k}\min(2^k,1)\bigg[&M(|Mu|^{p-1})(x)M(\dot P_{j+k}u)(x)\\
&+ 2^{kd(\frac{1}{r}-1)}\brk{M\bbrk{\aabs{M(|Mu|^{p-1})M(\dot P_{j+k}u)}^r}(x)}^{1/r}\bigg].
}
Then
\EQ{
&\|D^sF(u)\|_{F^0_{1,2}(\R^d)}\\
\sim &\norm{2^{sj}\dot P_j F(u)}_{L^1l_j^2}\\
\les & \norm{\sum_{k}\min(2^k,1)2^{-ks}2^{(j+k)s}M(|Mu|^{p-1})\cdot M(\dot P_{j+k}u)}_{L^1l_j^2}\\
&+ \normo{\sum_{k}\min(2^k,1)2^{-ks}2^{(j+k)s} 2^{kd(\frac{1}{r}-1)}\brk{M\bbrk{\aabs{M(|Mu|^{p-1})M(\dot P_{j+k}u)}^r}(x)}^{1/r}\bigg]}_{L^1l_j^2}\\
:=&A+B.
}
For the term $A$, since $s>0$ we get by Lemma \ref{lem:maxi} and the H\"older inequality that
\EQ{
A\les& \norm{M(|Mu|^{p-1})\cdot \norm{M(2^{js} \dot P_{j}u)}_{l_j^2}}_{L^1}\\
\les& \norm{u}_{L^t}^{p-1}\norm{D^s u}_{L^q}.
}
For the term $B$, since $s\in (0,1)$, we choose $r<1$ but sufficiently close to 1 such that $s-d(\frac{1}{r}-1)>0$. Then we get by Lemma \ref{lem:maxi} and the H\"older inequality that
\EQ{
B\les& \normo{\brk{M\bbrk{\aabs{M(|Mu|^{p-1})M(2^{js}\dot P_{j}u)}^r}^{1/r}}}_{L^1l_j^2}\\
\les& \norm{u}_{L^t}^{p-1}\norm{D^s u}_{L^q}.
}

It remains to prove \eqref{eq:pointwise}, we may assume $j=0$ by scaling and $x=0$ by translation. Then by fundamental theorem of calculus we have
\EQ{
F(u)=F(P_{\leq 0}u)+\int_0^1 F'(P_{\leq 0} u+tP_{>0}u) dt\cdot P_{>0}u
}
and thus
\EQ{
\dot P_0F(u)(0)=&\dot P_0[F(P_{\leq 0}u)](0)+\dot P_0[\int_0^1 F'(P_{\leq 0} u+tP_{>0}u) dt\cdot P_{>0}u](0)\\
:=&I+II.
}
For the term $II$, we have
\EQ{
II=\sum_{k>0}\sum_{|j-k|\leq 5}\dot P_0\brk{\dot P_j[\int_0^1 F'(P_{\leq 0} u+tP_{>0}u) dt]\cdot \dot P_{k}u}.
}
By \eqref{eq:hhimprove} we have
\EQ{
|II|\les& \sum_{k>0}\sum_{|j-k|\leq 5}2^{kd(\frac{1}{r}-1)}\brk{M\bbrk{\aabs{\dot P_j[\int_0^1 F'(P_{\leq 0} u+tP_{>0}u) dt]\cdot \dot P_{k}u}^r}(0)}^{1/r}\\
\les& \sum_{k>0}2^{kd(\frac{1}{r}-1)}\brk{M\bbrk{\aabs{M(|Mu|^{p-1}) \dot P_{k}u}^r}(0)}^{1/r}.
}
For the term $I$, we have
\EQ{
I=&\dot P_0[F(P_{\leq 0}u)-F(P_{\leq 0}u)(0)](0)\\
=&\dot P_0\brk{\int_0^1 F'(P_{\leq 0} u(0)+t[P_{\leq 0}u-P_{\leq 0}u(0)]) dt\cdot [P_{\leq 0}u-P_{\leq 0}u(0)]}\\
=&\sum_{k\leq 0} \dot P_0\brk{[\int_0^1 F'(P_{\leq 0} u(0)+t[P_{\leq 0}u-P_{\leq 0}u(0)]) dt]\cdot [\dot P_{k}u-\dot P_{k}u(0)]}\\
=&\sum_{k\leq 0} \int \brk{[\int_0^1 F'(P_{\leq 0} u(0)+t[P_{\leq 0}u-P_{\leq 0}u(0)]) dt]\cdot \int_0^1 \nabla \dot P_{k}u(sy)\cdot yds}\chi(y)dy.
}
By Lemma \ref{lem: scalarmaxiesti} we have
\EQ{
|\nabla \dot P_{k}u(sy)|\les 2^k M(|\dot P_k u|^r)(0)^{1/r}(1+|y|)^{d/r}.
}
Therefore we get
\EQ{
I\les&\sum_{k\leq 0} 2^k M(|Mu|^{p-1})(0)M(|\dot P_k u|^r)(0)^{1/r}.
}
The proof is completed.
\end{proof}

\begin{lem}[Nonlinear estimate in Hardy space]\label{lem:nonlinear}
\begin{align}
\||u|^4 u\|_{F^3_{1,2}(\R^3)}\lesssim \|u\|_{\infty}^{3}\|J^3 u\|_2 \norm{u}_2.
\end{align}
\end{lem}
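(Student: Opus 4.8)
The plan is to reduce the statement to a homogeneous Hardy-space estimate for $D^3(|u|^4u)$ and then combine the Leibniz rule of Lemma~\ref{lem:Leib} with the classical fractional Leibniz rule \eqref{eq:Leibniz2} and Gagliardo--Nirenberg interpolation. First I peel off the lowest frequency block: since the kernel of $P_{\leq 0}$ lies in $L^1$ and $\ell^2(\{j\ge 1\})$ embeds isometrically in $\ell^2(\Z)$, one has $\|f\|_{F^3_{1,2}}\lesssim\|f\|_{L^1}+\|f\|_{\dot F^3_{1,2}}$ for every $f$. Applying this with $f=|u|^4u$ and bounding $\||u|^4u\|_{L^1}\le\|u\|_\infty^3\|u\|_2^2$ by H\"older, it remains to control $\||u|^4u\|_{\dot F^3_{1,2}}$. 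By the lifting property of homogeneous Triebel--Lizorkin spaces (see \cite{Triebel}) together with $\dot F^0_{1,2}\sim\Hl^1$, this quantity is comparable to $\|D^3(|u|^4u)\|_{\Hl^1}$, and since $\|D^3u\|_2\le\|J^3u\|_2$ the lemma reduces to
\[
\|D^3(|u|^4u)\|_{\Hl^1}\ \lesssim\ \|u\|_\infty^3\,\|D^3u\|_2\,\|u\|_2 .
\]

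Next I apply Lemma~\ref{lem:Leib} with $f=u$, $g=|u|^4$, using the admissible pairs $(p_1,q_1)=(2,2)$ and $(p_2,q_2)=(4,\tfrac43)$, which yields
\[
\|D^3(u\cdot|u|^4)\|_{\Hl^1}\ \lesssim\ \|D^3u\|_2\,\||u|^4\|_2\ +\ \|u\|_4\,\|D^3(|u|^4)\|_{4/3}.
\]
For the first summand, $\||u|^4\|_2=\|u\|_8^4$ and $\|u\|_8\lesssim\|u\|_\infty^{3/4}\|u\|_2^{1/4}$, so it is $\lesssim\|u\|_\infty^3\|D^3u\|_2\|u\|_2$. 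For the second summand I write $|u|^4=u^2\bar u^2$ as a product of four factors and apply \eqref{eq:Leibniz2} repeatedly, each step keeping all three derivatives on a single factor; every use is legitimate because $s=3>0$ and the H\"older and output exponents arising stay in $(1,\infty)$ (indeed the output exponents lie in $(1,2)$, hence above $\tfrac1{1+s}=\tfrac14$). This gives $\|D^3(|u|^4)\|_{4/3}\lesssim\|D^3u\|_2\,\|u\|_{12}^3$, and since $\|u\|_4\lesssim\|u\|_\infty^{1/2}\|u\|_2^{1/2}$ and $\|u\|_{12}\lesssim\|u\|_\infty^{5/6}\|u\|_2^{1/6}$, the second summand is also $\lesssim\|u\|_\infty^3\|D^3u\|_2\|u\|_2$; adding the two contributions finishes the proof.

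The only routine work is the bookkeeping of the H\"older and interpolation exponents, which is in fact forced, since both sides of the displayed inequality have the same homogeneity in $u$ and the same behaviour under $u(x)\mapsto u(\lambda x)$. The one genuine point is that the output must live in $\Hl^1$ and not merely in $L^1$ --- this is precisely why Lemma~\ref{lem:Leib} is invoked rather than a bare H\"older bound --- together with the minor nuisance that Lemma~\ref{lem:Leib} excludes the endpoint $p=\infty$, so one cannot pair $u\in L^\infty$ directly against $D^3(|u|^4)$ and must instead place $u$ in $L^4$ and recover the missing powers of $\|u\|_\infty$ from Gagliardo--Nirenberg applied to the remaining factors. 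An alternative route, closer in spirit to the proof of Lemma~\ref{lem:Leib}, would be to run Bony's paraproduct directly on the five-fold product $\prod_{i=1}^5\dot P_{j_i}u$, estimating the high--low interactions by Lemmas~\ref{radialmaxicon} and \ref{lem:maxi} and the high--high interactions by the improved bound \eqref{eq:hhimprove} with $r\in(\tfrac12,1)$ chosen so that $d(\tfrac1r-1)<3$.
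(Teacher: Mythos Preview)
Your proof is correct and follows essentially the same approach as the paper: peel off the low frequency in $L^1$, reduce to $\|D^3(|u|^4u)\|_{\Hl^1}$, and then combine Lemma~\ref{lem:Leib} with the classical Leibniz rule \eqref{eq:Leibniz2}. The paper's own proof merely asserts that the high-frequency bound ``follows from Lemma~\ref{lem:Leib} and \eqref{eq:Leibniz2}'' without specifying exponents; you have carried out that bookkeeping explicitly and correctly, including the observation that the endpoint $q_i=\infty$ is excluded in Lemma~\ref{lem:Leib} so the $\|u\|_\infty$ factors must be recovered via interpolation.
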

\begin{proof}
For the low frequency component, it's easy to see
\EQ{
\|P_{\leq 0}(|u|^4 u)\|_{L^1}\les \|u\|_{\infty}^{3}\|u\|_2 \norm{u}_2.
}
Thus it remains to prove 
\EQ{
\|P_{\geq 1}(|u|^4 u)\|_{\dot F^3_{1,2}}\lesssim \|u\|_{\infty}^{3}\|J^3 u\|_2 \norm{u}_2.
}
The above inequality follows from Lemma \ref{lem:Leib} and \eqref{eq:Leibniz2}.
\end{proof}

\subsection{Proof of Theorem \ref{Th1}.}

From the duhamel formula, we have 
\begin{equation}
u= e^{it\triangle}u_0 -i\mu\int_0^t e^{i(t-s)\triangle}|u(s)|^{4}u(s)ds.
\end{equation}
Let $T$ be sufficiently large. For simplicity, we define $\|u\|_{X_T}:= \|t^{3/2}u\|_{L^{\infty}_{x, t\in [T, \infty)}}$. Then applying the decay estimate for the Schr\"odinger equation \eqref{decay-NLS}, we obtain that
\EQ{
\|u\|_{X_T} &\leq \|u_0\|_{L^1_x}+\int_0^{\infty}\|e^{-is\triangle}|u(s)|^{4}u(s)\|_{L^1_x}ds\\
& \leq \|u_0\|_{L^1_x}+\int_0^{T}\|e^{-is\triangle}|u(s)|^{4}u(s)\|_{L^1_x}ds+\int_T^{\infty}\|e^{-is\triangle}|u(s)|^{4}u(s)\|_{L^1_x}ds\\
&:=\|u_0\|_{L^1_x}+I+II.
}

For the term $I$, using Strichartz estimates we can easily get
\EQ{
I\leq C_T.
}
It remains to estimate the term $II$.  By Lemma \ref{lem:Sch} we have
\EQ{
II\les& \int_T^{\infty}\|e^{-is\triangle}|u(s)|^{4}u(s)\|_{F^0_{1,2}}ds\\
\les& \int_T^{\infty}s^{3/2}\||u(s)|^{4}u(s)\|_{F^3_{1,2}}ds\\
\les& \int_T^{\infty}s^{3/2}\|D^3u(s)\|_{L^2}\norm{u(s)}_{L^2}\norm{u(s)}_{L^\infty}^{3}ds\\
\les& \norm{u}^2_{L_{t\in [T,\infty)}^2L_x^\infty}\norm{u}_{X_T}
}
where we used $\norm{u}_{L_t^\infty H^3}\leq C$.
Thus we obtain 
\EQ{
\|u\|_{X_T}\les \norm{u_0}_{L_x^1}+C_T+\norm{u}^2_{L_{t\in [T,\infty)}^2L_x^\infty}\norm{u}_{X_T}.
}
Since $\norm{u}_{L_{t\in \R}^2L_x^\infty}<\infty$ by \eqref{eq:spacetimebound} and Sobolev embedding, then $\norm{u}_{L_{t\in [T,\infty)}^2L_x^\infty}\to 0$ as $T\to \infty$.
Taking $T>0$ sufficiently large, we get
$\norm{u}_{X_T}\les 1$.

Now we show that $\lim_{t\to +\infty}e^{-it\Delta}u(t)=\phi_+$ in $L^1$.  It suffices to show $\int_0^\infty e^{-is\Delta} |u|^4u ds$ is convergent in $L^1$, which can be obtained by the previous argument. 
Moreover, we have
\EQ{
\norm{u(t)-e^{it\Delta}\phi_+}_{L^\infty}\les& \normo{\int_t^\infty e^{i(t-s)\triangle}|u(s)|^{4}u(s)ds}_{L^\infty}\\
\les& \normo{\int_t^{2t} e^{i(t-s)\triangle}|u(s)|^{4}u(s)ds}_{L^\infty}+\normo{\int_{2t}^\infty e^{i(t-s)\triangle}|u(s)|^{4}u(s)ds}_{L^\infty}\\
\les& \int_t^{2t} \normo{e^{i(t-s)\triangle}|u(s)|^{4}u(s)}_{H^2}ds+\int_{2t}^\infty |t-s|^{-3/2}\norm{|u(s)|^{4}u(s)}_{L^1}ds\\
\les& \int_t^{2t} \norm{u(s)}_{L^\infty}^4\normo{u(s)}_{H^2}ds+\int_{2t}^\infty |s|^{-3/2}\norm{u(s)}_{L^\infty}^{3}\norm{u(s)}_{L^2}^2ds\les t^{-5}.
}

Conversely, given final data $\phi_+\in H^3\cap L^1$, by the classical results one can get a global solution $u\in C(\R:H^3)\cap L_{t,x}^{10}$ such that $e^{-it\Delta}u(t)\to \phi_+$ in $H^3$ as $t\to \infty$.  Moreover, 
\begin{equation}
u= e^{it\triangle}\phi_+ -i\mu\int_{t}^\infty e^{i(t-s)\triangle}|u(s)|^{4}u(s)ds.
\end{equation}
Repeating the previous arguments, we see $u\in X_T$ and $e^{-it\Delta}u(t)\to \phi_+$ in $L^1$ as $t\to \infty$. Thus the proof is completed.

\subsection*{Acknowledgement}
The authors are grateful to Professor Kenji Nakanishi for the precious discussions.

\end{document}